\documentclass[12pt]{amsproc}
\usepackage[utf8]{inputenc}
\usepackage[T1]{fontenc}
\usepackage{amsmath,amsfonts,amssymb,amsthm,mathtools}
\usepackage{fullpage}
\usepackage{graphicx}
\usepackage{hyperref}
\hypersetup{
  colorlinks=true,
  citecolor=cyan,
  linkcolor=blue,
  urlcolor=cyan,
  pdfauthor={Davrbek Oltiboev; Furqat Vafoyev},
  pdftitle={Critical cusp cancellation and analytic response for a Bernoulli random interval map}
}

\newtheorem{theorem}{Theorem}[section]
\newtheorem{lemma}[theorem]{Lemma}
\newtheorem{proposition}[theorem]{Proposition}
\newtheorem{corollary}[theorem]{Corollary}
\theoremstyle{definition}

\theoremstyle{remark}

\numberwithin{equation}{section}
\newcommand{\bv}{\mathrm{BV}}
\newcommand{\ess}{\mathrm{ess}}

\title{Critical cusp cancellation and analytic response for a Bernoulli random interval map}

\author[D.~Oltiboev]{Davrbek Oltiboev}
\address{V.I. Romanovskiy Institute of Mathematics, Uzbekistan Academy of Sciences}
\address{Tashkent, Uzbekistan}
\email{davrbek.oltiboyev@gmail.com}

\author[F.~Vafoyev]{Furqat Vafoyev}
\address{National University of Uzbekistan named after M. Ulugbek}
\address{Tashkent, Uzbekistan}
\email{vafoyevfurqat@gmail.com}

\date{}

\begin{document}
\begin{abstract}
We show that the critical cusp exponent $\beta=-1/2$ admits $W^{2,1}$ invariant
 densities in an explicit family of expanding full-branch interval maps. The usual
 estimates exclude this exponent because two zero-order coefficients in the second
 derivative of the transfer operator are individually non-integrable. Their signed
 combination, however, contains the factor $\beta(2\beta+1)$ for a pure power cusp.
 We exploit its cancellation at $\beta=-1/2$ to prove a $W^{2,1}$ Lasota--Yorke
 inequality and a spectral gap for the explicit family. We also identify a separate
 obstruction caused by an interior cusp value. As an application, a small Bernoulli
 probability perturbation between two fixed maps has a stationary density that
 depends real-analytically on the perturbation parameter in $W^{2,1}$, with explicit
 formulas for all response derivatives.
\end{abstract}
\maketitle

\section{Introduction}

Invariant and stationary densities describe the statistical behaviour of a dynamical
system. For a density $h_\varepsilon$ and an observable $\varphi$, the stationary
expectation is $\int \varphi h_\varepsilon\,dx$. How does it change under a small
perturbation? Statistical stability asks for continuity, whereas linear response
asks for a first-order expansion. The latter allows perturbed averages to be
approximated from the unperturbed density and its response, and quantifies the
sensitivity of long-term statistics.

Uniform expansion alone does not ensure linear response for interval maps.
Perturbations of tent-like maps changing the turning-point value can destroy
differentiability when the one-sided slopes are finite \cite{BaladiSmania}.
Bahsoun and Galatolo \cite{BahsounGalatolo} showed that an unbounded cusp
derivative can instead restore linear response, because the vanishing
inverse-derivative weights improve the regularity of the transfer operator.
Oltiboev \cite{Oltiboev} obtained an $L^p$ version for $p>1$. Oltiboev,
Rakhimov and Ruziboev \cite{OltiboevRakhimovRuziboev} considered random
compositions with a common cusp point and value. For $-1<\beta<-1/2$, they
obtained $W^{2,1}$ stationary densities and response in $W^{1,1}$ under
perturbations of the probability law, following the viewpoint of
\cite{BahsounRuziboevSaussol}.

The present note is motivated by the strict upper endpoint in
\cite{OltiboevRakhimovRuziboev}. Its second-order estimate bounds two singular
zero-order terms separately by $t^{-2\beta-2}$, where $t$ is the distance to
the cusp. This weight is integrable for $\beta<-1/2$, but becomes $t^{-1}$ at
the endpoint. Does second-order regularity actually fail there, or do these
estimates discard an essential cancellation? The divergence of a sufficient
bound alone does not answer this question.

We answer it for an explicit full-branch family. The signed pure-power
coefficient contains $\beta(2\beta+1)$, which vanishes at $\beta=-1/2$.
Adding a constant to the critical power in the branch derivative leaves an
integrable remainder and yields a $W^{2,1}$ Lasota--Yorke inequality and a
spectral gap. The family is a critical specialization of the unperturbed model
of \cite{OltiboevRakhimovRuziboev}, also related to the model of
\cite{Oltiboev}. The contribution is the endpoint analysis, rather than an
unrelated construction.

An independent boundary issue remains. In \cite{OltiboevRakhimovRuziboev},
the first derivative of the transferred function has zero trace at the common
cusp value. At the endpoint that trace can be nonzero, even if the cusp value
is fixed. Its zero extension across an interior cusp value may then fail to
belong to $W^{2,1}$. We avoid this obstruction by placing the cusp value at~$1$.

Finally, we vary the Bernoulli probabilities of two fixed maps. The operator
is affine in the parameter and bounded on the same $W^{2,1}$ space, so the
spectral gap and standard perturbation theory \cite{Kato} give analytic
response. The stronger response topology reflects this restricted perturbation:
we do not extend the full deterministic or general random-law theorems of
\cite{Oltiboev,OltiboevRakhimovRuziboev} to the critical exponent.

\section{The critical coefficient}

Let $I=[0,1]$. Suppose that $f:I\to I$ has finitely many monotone branches,
each $C^1$ with nonzero derivative in its interior. Its transfer operator is
given, almost everywhere, by
\[
 (L_f\varphi)(x)=\sum_{y\in f^{-1}(x)}\frac{\varphi(y)}{|f'(y)|}.
\]
It is positive, preserves Lebesgue integrals, and is an $L^1(I)$ contraction.
An invariant probability density is a nonnegative fixed point with integral
one. The Sobolev spaces used below are taken on $(0,1)$, with no boundary
conditions, and have norms
$\|\varphi\|_{W^{k,1}}=\sum_{j=0}^{k}\|\varphi^{(j)}\|_1$,
for $k=1,2$.

When the branches are $C^3$ in their interiors, the zero-order coefficient in
the second derivative of $L_f\varphi$, away from endpoints of branch images,
is
\begin{equation}\label{eq:Df}
 D_f:=3\frac{(f'')^2}{(f')^4}-\frac{f'''}{(f')^3}.
\end{equation}
In the second-order estimate of \cite{OltiboevRakhimovRuziboev}, its absolute
value is controlled by
\[
 3\frac{|f''|^2}{|f'|^4}+\frac{|f'''|}{|f'|^3}
 =O(t^{-2\beta-2}).
\]
At the endpoint we must retain the signed combination in \eqref{eq:Df}
instead.

\begin{lemma}\label{lem:pure}
Let $c$ be the turning point and $t=|x-c|$. Suppose that the derivative
magnitude on one branch is $p(t)=Kt^\beta$, where $K>0$ and $\beta<0$.
Then, on either branch,
\begin{equation}\label{eq:pure}
 D_f=3\frac{(p')^2}{p^4}-\frac{p''}{p^3}
 =\frac{\beta(2\beta+1)}{K^2}\,t^{-2\beta-2}.
\end{equation}
In particular, this coefficient vanishes identically when $\beta=-1/2$.
\end{lemma}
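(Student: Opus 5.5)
The proof is a direct computation in two steps: first reduce $D_f$ to an expression in $p(t)$ on each branch, then substitute the power law.

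\emph{Step 1: reduction to $p$.} On a branch to the right of $c$ we have $t=x-c$ and $f'(x)=\sigma p(t)$ with a fixed sign $\sigma\in\{\pm1\}$. Hence $f''=\sigma p'$ and $f'''=\sigma p''$. To the left of $c$ we have $t=c-x$, so $dt/dx=-1$. This gives $f'=\sigma p(t)$, $f''=-\sigma p'(t)$ and $f'''=\sigma p''(t)$. In either case $(f'')^2=(p')^2$ and $(f')^4=p^4$. Moreover $f'''/(f')^3=\sigma p''/(\sigma^3p^3)=p''/p^3$, since $\sigma^2=1$. So $D_f=3(p')^2/p^4-p''/p^3$ on either branch, which is the first equality in \eqref{eq:pure}. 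The point to check is that both the orientation sign of $f'$ and the reflection $t=c-x$ drop out. They do, because $f''$ enters only through its square, and the cubic power of $f'$ in the second term cancels the sign of $f'''$.

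\emph{Step 2: the power law.} Substitute $p=Kt^\beta$, $p'=K\beta t^{\beta-1}$ and $p''=K\beta(\beta-1)t^{\beta-2}$. Then
\[
3\frac{(p')^2}{p^4}=\frac{3\beta^2}{K^2}t^{-2\beta-2},\qquad
\frac{p''}{p^3}=\frac{\beta(\beta-1)}{K^2}t^{-2\beta-2}.
\]
Their difference is $\frac{3\beta^2-\beta^2+\beta}{K^2}t^{-2\beta-2}=\frac{\beta(2\beta+1)}{K^2}t^{-2\beta-2}$, which is \eqref{eq:pure}. The factor $2\beta+1$ vanishes at $\beta=-1/2$, so the coefficient is identically zero there.

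The only real obstacle is the sign bookkeeping in Step 1. The computation in Step 2 is routine.
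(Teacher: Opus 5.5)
Your proposal is correct and follows essentially the same route as the paper. You first reduce $D_f$ to $3(p')^2/p^4-p''/p^3$ by tracking the signs of $(f',f'',f''')$ on each side of the cusp, then substitute $p=Kt^\beta$ and use $3\beta^2-\beta(\beta-1)=\beta(2\beta+1)$. Your general sign $\sigma$ covers every side/orientation combination, whereas the paper treats only the increasing-left and decreasing-right branches that actually occur, but the computation is the same.
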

\begin{proof}
On the increasing branch, $t=c-x$ and
$(f',f'',f''')=(p,-p',p'')$. On the decreasing branch,
$t=x-c$ and $(f',f'',f''')=(-p,-p',-p'')$.
Both substitutions give the first equality in \eqref{eq:pure}.
Since $p'=K\beta t^{\beta-1}$ and
$p''=K\beta(\beta-1)t^{\beta-2}$, the second equality follows from
$3\beta^2-\beta(\beta-1)=\beta(2\beta+1)$.
\end{proof}

\section{An explicit critical cusp family}

The unperturbed family in \cite[Section~4]{OltiboevRakhimovRuziboev} can be
written, with $c=1/2$, as
\[
 F_{\beta,\vartheta}(c\pm t)
 =1-\vartheta t-(1-\vartheta/2)(2t)^{\beta+1},
 \qquad 0\le t\le\frac12.
\]
Its specialization at $\beta=-1/2$ gives the square-root family below.
For $\vartheta=3/2$, it also agrees with the unperturbed formula in
\cite{Oltiboev} at root parameter $k=2$. These algebraic identities motivate
the example; neither earlier theorem covers this critical case.

The derivative magnitude is a constant plus $Kt^{-1/2}$. Integrating from a
cusp of height $1$ and imposing zero endpoint values fixes $K$.
Specifically, choose $1<\vartheta<2$ and set
\begin{equation}\label{eq:f}
 K_\vartheta:=\frac{1-\vartheta/2}{\sqrt2}>0,\qquad
 f_\vartheta(c\pm t):=1-\vartheta t-2K_\vartheta\sqrt t,
 \qquad 0\le t\le\frac12.
\end{equation}
Then $f_\vartheta(0)=f_\vartheta(1)=0$ and $f_\vartheta(c)=1$.
For $t=|x-c|>0$,
\begin{equation}\label{eq:exp}
 |f_\vartheta'(x)|=p_\vartheta(t):=\vartheta+K_\vartheta t^{-1/2},
 \qquad \inf_{x\ne c}|f_\vartheta'(x)|=1+\frac{\vartheta}{2}
 =:\Lambda_\vartheta>1.
\end{equation}
Thus $f_\vartheta$ has two full branches, is uniformly expanding, and has
cusp exponent $\beta=-1/2$. Write $\lambda_\vartheta:=\Lambda_\vartheta^{-1}<1$.

The inverse branches are also explicit. For $x\in I$, define
\begin{equation}\label{eq:u}
 u_\vartheta(x):=
 \frac{\sqrt{K_\vartheta^2+\vartheta(1-x)}-K_\vartheta}{\vartheta}.
\end{equation}
Then $g_{0,\vartheta}(x)=c-u_\vartheta(x)^2$ and
$g_{1,\vartheta}(x)=c+u_\vartheta(x)^2$. Consequently,
\begin{equation}\label{eq:explicitL}
 (L_\vartheta\varphi)(x)
 =\frac{u_\vartheta(x)}{K_\vartheta+\vartheta u_\vartheta(x)}
 \left[\varphi\bigl(c-u_\vartheta(x)^2\bigr)
 +\varphi\bigl(c+u_\vartheta(x)^2\bigr)\right].
\end{equation}
Here and below $L_\vartheta=L_{f_\vartheta}$.

\begin{lemma}\label{lem:critical}
For the family \eqref{eq:f}, on either branch with $t=|x-c|>0$,
\begin{equation}\label{eq:Dtheta}
 D_\vartheta(t):=3\frac{(f_\vartheta'')^2}{(f_\vartheta')^4}
 -\frac{f_\vartheta'''}{(f_\vartheta')^3}
 =-\frac{3K_\vartheta\vartheta}
 {4\sqrt t\,(K_\vartheta+\vartheta\sqrt t)^4},
\end{equation}
\begin{equation}\label{eq:lowerterms}
 \frac{|f_\vartheta''|}{|f_\vartheta'|^2}
 =\frac{K_\vartheta}{2\sqrt t\,(K_\vartheta+\vartheta\sqrt t)^2},
 \qquad
 \left|\frac{f_\vartheta''}{(f_\vartheta')^3}\right|
 =\frac{K_\vartheta}{2(K_\vartheta+\vartheta\sqrt t)^3}.
\end{equation}
The coefficients in \eqref{eq:Dtheta} and the first part of
\eqref{eq:lowerterms} belong to $L^1(0,1/2)$, and the last coefficient is
bounded.
\end{lemma}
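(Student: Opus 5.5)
The plan is to reduce everything to the one-variable function $p_\vartheta(t)=\vartheta+K_\vartheta t^{-1/2}$ from \eqref{eq:exp} and then compute directly. The first equality in \eqref{eq:pure} in Lemma~\ref{lem:pure} was derived only from the sign substitutions $(f',f'',f''')=(p,-p',p'')$ on the increasing branch and $(-p,-p',-p'')$ on the decreasing branch. That argument never used the pure power form of $p$. So I would apply it verbatim to $p_\vartheta$, giving on either branch
\[
D_\vartheta=\frac{3(p_\vartheta')^2-p_\vartheta p_\vartheta''}{p_\vartheta^4},\qquad
\frac{|f_\vartheta''|}{|f_\vartheta'|^2}=\frac{|p_\vartheta'|}{p_\vartheta^2},\qquad
\left|\frac{f_\vartheta''}{(f_\vartheta')^3}\right|=\frac{|p_\vartheta'|}{p_\vartheta^3}.
\]

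Next I would insert $p_\vartheta'=-\tfrac{K_\vartheta}{2}t^{-3/2}$ and $p_\vartheta''=\tfrac{3K_\vartheta}{4}t^{-5/2}$. The key point, which is exactly the cancellation of Lemma~\ref{lem:pure} at $\beta=-1/2$, is that the $K_\vartheta^2t^{-3}$ contributions cancel:
\[
3(p_\vartheta')^2-p_\vartheta p_\vartheta''=\tfrac34K_\vartheta^2t^{-3}-\bigl(\tfrac34\vartheta K_\vartheta t^{-5/2}+\tfrac34K_\vartheta^2t^{-3}\bigr)=-\tfrac34\vartheta K_\vartheta t^{-5/2}.
\]
For the denominators I would write $p_\vartheta=t^{-1/2}(K_\vartheta+\vartheta\sqrt t)$, so that $p_\vartheta^k=t^{-k/2}(K_\vartheta+\vartheta\sqrt t)^k$. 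Dividing then gives \eqref{eq:Dtheta}, with the power $t^{-5/2+2}=t^{-1/2}$. The same factorization gives the two identities in \eqref{eq:lowerterms}, with powers $t^{-3/2+1}=t^{-1/2}$ and $t^{-3/2+3/2}=t^{0}$ respectively.

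For the integrability claims I would use $K_\vartheta+\vartheta\sqrt t\ge K_\vartheta>0$. This bounds $|D_\vartheta(t)|\le \tfrac{3\vartheta}{4K_\vartheta^3}t^{-1/2}$ and the first coefficient in \eqref{eq:lowerterms} by $\tfrac{1}{2K_\vartheta}t^{-1/2}$. Both bounds lie in $L^1(0,1/2)$ since $\int_0^{1/2}t^{-1/2}\,dt=\sqrt2$. The last coefficient is at most $1/(2K_\vartheta^2)$. There is no real obstacle here. The only step needing care is to justify reusing the first equality of \eqref{eq:pure} for a non-pure $p$, and this is handled by observing that its proof is purely the sign bookkeeping above.
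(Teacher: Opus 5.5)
Your proposal is correct and follows the paper's proof: you reduce to $3(p_\vartheta')^2/p_\vartheta^4-p_\vartheta''/p_\vartheta^3$ using the branch sign bookkeeping from Lemma~\ref{lem:pure}, and you substitute $p_\vartheta'$, $p_\vartheta''$ and $p_\vartheta=t^{-1/2}(K_\vartheta+\vartheta\sqrt t)$. You then bound each coefficient using $K_\vartheta+\vartheta\sqrt t\ge K_\vartheta>0$. You also make explicit two points the paper leaves implicit: that the sign substitution does not depend on the pure-power form, and the cancellation of the $K_\vartheta^2t^{-3}$ terms.
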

\begin{proof}
From \eqref{eq:exp}, $p_\vartheta'=-(K_\vartheta/2)t^{-3/2}$ and
$p_\vartheta''=(3K_\vartheta/4)t^{-5/2}$.
Substituting these expressions and
$p_\vartheta=t^{-1/2}(K_\vartheta+\vartheta\sqrt t)$ into
$3(p_\vartheta')^2/p_\vartheta^4-p_\vartheta''/p_\vartheta^3$
gives \eqref{eq:Dtheta}. The same substitution gives
\eqref{eq:lowerterms}. Since $K_\vartheta>0$, the first two coefficients
are $O(t^{-1/2})$ and the last one is $O(1)$.
\end{proof}

\section{\texorpdfstring{$W^{2,1}$}{W2,1} regularity at the endpoint}

The estimate below follows the Lasota--Yorke strategy \cite{LasotaYorke} in a
Sobolev setting. The cancellation supplies the missing second-order
integrability. There is also a first-order change from
\cite{OltiboevRakhimovRuziboev}: at the critical exponent the coefficient
$|f_\vartheta''|/|f_\vartheta'|^2$ is in $L^1$ but not in $L^2$, so the
$(W^{1,1},L^2)$ estimate used there is not imported. We instead use
$W^{1,1}(I)\hookrightarrow L^\infty(I)$ to bound the lower-order terms, and
obtain quasi-compactness directly from the second-order estimate.

\begin{proposition}\label{prop:LY}
For each $\vartheta\in(1,2)$, the operator $L_\vartheta$ is bounded on
$W^{1,1}(I)$ and $W^{2,1}(I)$. There exists $C_\vartheta>0$ such that
\begin{equation}\label{eq:LY}
 \|L_\vartheta\varphi\|_{W^{2,1}}
 \le\lambda_\vartheta^2\|\varphi\|_{W^{2,1}}
 +C_\vartheta\|\varphi\|_{W^{1,1}},\qquad \varphi\in W^{2,1}(I).
\end{equation}
Moreover, the essential spectral radius of $L_\vartheta$ on $W^{2,1}(I)$
is at most $\lambda_\vartheta^2$.
\end{proposition}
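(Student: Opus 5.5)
Our plan is to differentiate the explicit formula \eqref{eq:explicitL} branch by branch, or equivalently the generic form $L_\vartheta\varphi=\sum_i (\varphi/|f'|)\circ g_i$. We then estimate each term using Lemma~\ref{lem:critical}. Write $g_i=g_{i,\vartheta}$; these are $C^\infty$ on $[0,1)$ and map onto $[0,c]$ and $[c,1]$ respectively, with $|g_i'|=1/p_\vartheta(t)\le\lambda_\vartheta$ at $t=|g_i(x)-c|$.

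\textbf{First derivative.} For $\varphi\in W^{2,1}$ and $x\in(0,1)$, the chain rule gives
\[
 (L\varphi)'=\sum_i\Bigl[\frac{\varphi'}{f'|f'|}-\frac{\varphi f''}{|f'|(f')^2}\Bigr]\circ g_i .
\]
After the change of variables $x=f(y)$, the first term has $L^1$ norm at most $\lambda_\vartheta\|\varphi'\|_1$. The second term is bounded by $\int|\varphi|\,|f''|/|f'|^2\,dy\le\|\varphi\|_\infty\|f''/(f')^2\|_1\le C\|\varphi\|_{W^{1,1}}$, using the first part of \eqref{eq:lowerterms} and the embedding $W^{1,1}\hookrightarrow L^\infty$. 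This proves boundedness on $W^{1,1}$. Absolute continuity of $L\varphi$ on $(0,1)$ follows because each summand is absolutely continuous on $(0,1)$. Both branches land in $(0,1)$, the cusp value is $1$, which is an endpoint, so no interior gluing is needed and no boundary trace issue arises.

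\textbf{Second derivative.} Differentiating once more and collecting terms, we obtain
\[
 (L\varphi)''=\sum_i\Bigl[\frac{\varphi''}{(f')^2|f'|}-3\frac{\varphi' f''}{f'(f')^2|f'|}\cdot\sigma+\varphi\, D_f\,\frac{1}{|f'|}\cdot\sigma'\Bigr]\circ g_i ,
\]
where $\sigma,\sigma'$ are signs. The coefficient of $\varphi$ is exactly $D_f/|f'|$ composed with $g_i$, which is the signed combination \eqref{eq:Df}. In $L^1(dx)$, after substituting $x=f(y)$ (which contributes a factor $|f'|$), the three terms are bounded as follows:
\begin{itemize}
\item the first by $\lambda_\vartheta^2\|\varphi''\|_1$;
\item the second by $3\|\varphi'\|_1\,\sup|f''/(f')^3|$, which is finite by the last part of \eqref{eq:lowerterms};
\item the third by $\|\varphi\|_\infty\|D_\vartheta\|_{L^1}$, which is finite by \eqref{eq:Dtheta} together with $\|\varphi\|_\infty\le\|\varphi\|_{W^{1,1}}$.
\end{itemize}
Adding the zero- and first-order parts, which are bounded by $C\|\varphi\|_{W^{1,1}}$, gives \eqref{eq:LY}. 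It also gives boundedness on $W^{2,1}$.

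\textbf{Main obstacle.} The main difficulty is justifying that the distributional second derivative equals the pointwise formula. We must check that $(L\varphi)'$ is absolutely continuous on $(0,1)$ even though the coefficients blow up as $x\to1$, i.e. as $t\to0$. We would first prove the formula for $\varphi\in C^\infty(\bar I)$, where everything is explicit via $u_\vartheta$, which is smooth on $[0,1)$. We then verify integrability near $x=1$ using the bounds above, and extend by density of $C^\infty(\bar I)$ in $W^{2,1}$ together with continuity of the right-hand sides in $\varphi$.

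\textbf{Essential spectral radius.} The inclusion $W^{2,1}(I)\hookrightarrow W^{1,1}(I)$ is compact by Rellich--Kondrachov on an interval. Iterating \eqref{eq:LY} gives $\|L^n\varphi\|_{W^{2,1}}\le\lambda_\vartheta^{2n}\|\varphi\|_{W^{2,1}}+C_n\|\varphi\|_{W^{1,1}}$, using boundedness on $W^{1,1}$. Alternatively, $L_\vartheta^n$ is the transfer operator of $f^n$ with uniform constants. Hennion's theorem then yields $r_{\ess}\le\lambda_\vartheta^2$.
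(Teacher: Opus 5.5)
Your proposal is correct and follows the paper's proof: the same first- and second-derivative formulas with the signed coefficient $D_f$ kept whole, the same three $L^1$ bounds via Lemma~\ref{lem:critical} and $W^{1,1}\hookrightarrow L^\infty$, and smooth approximation. The anticipated obstacle does not really arise, since $u_\vartheta$ and the weight in \eqref{eq:explicitL} are smooth on all of $[0,1]$, so $L_\vartheta$ maps $C^\infty(\bar I)$ into itself.

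The only deviation is in the last step. You apply Hennion's theorem directly with $W^{1,1}$ as the weak norm and $n$-dependent constants, which gives $r_{\ess}\le\lambda_\vartheta^2$ at once. The paper instead first interpolates down to an $L^1$ weak norm to get the uniform bound \eqref{eq:iterLY}, and then lets $b\downarrow\lambda_\vartheta^2$. Both routes are valid.
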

\begin{proof}
For smooth $\varphi$, differentiation of \eqref{eq:explicitL}, or of the
inverse-branch formula, gives
\begin{equation}\label{eq:deriv1}
 (L_\vartheta\varphi)'
 =L_\vartheta\left(\frac{\varphi'}{f_\vartheta'}\right)
 -L_\vartheta\left(\frac{f_\vartheta''}{(f_\vartheta')^2}\varphi\right)
\end{equation}
and
\begin{equation}\label{eq:deriv2}
 \begin{aligned}
 (L_\vartheta\varphi)''
 &=L_\vartheta\left(\frac{\varphi''}{(f_\vartheta')^2}\right)
 -3L_\vartheta\left(\frac{f_\vartheta''}{(f_\vartheta')^3}\varphi'\right)\\
 &\quad+L_\vartheta\left(
 \left[3\frac{(f_\vartheta'')^2}{(f_\vartheta')^4}
 -\frac{f_\vartheta'''}{(f_\vartheta')^3}\right]\varphi\right).
 \end{aligned}
\end{equation}
Both branch images are $I$, so no characteristic function with an interior
endpoint occurs. The right-hand sides are integrable by Lemma~\ref{lem:critical};
hence these classical formulas on $(0,1)$ also give the weak derivatives
there. In particular, the two terms in the final coefficient of
\eqref{eq:deriv2} are kept together.

Using \eqref{eq:exp}, Lemma~\ref{lem:critical} and the $L^1$ contraction
property, we obtain
\begin{align*}
 \|(L_\vartheta\varphi)'\|_1
 &\le\lambda_\vartheta\|\varphi'\|_1+C\|\varphi\|_\infty
 \le C\|\varphi\|_{W^{1,1}},\\
 \|(L_\vartheta\varphi)''\|_1
 &\le\lambda_\vartheta^2\|\varphi''\|_1+C\|\varphi'\|_1 +2\|D_\vartheta\|_{L^1(0,1/2)}\|\varphi\|_\infty \le \lambda_\vartheta^2\|\varphi''\|_1+C\|\varphi\|_{W^{1,1}}.
\end{align*}
Together with $\|L_\vartheta\varphi\|_1\le\|\varphi\|_1$, these
estimates prove \eqref{eq:LY}. Smooth approximation establishes the stated
boundedness and inequality on the Sobolev spaces.

For the essential spectral radius, use the interpolation inequality
$\|\varphi\|_{W^{1,1}}\le\eta\|\varphi\|_{W^{2,1}}+C_\eta\|\varphi\|_1$,
valid for every $\eta>0$. For each $b\in(\lambda_\vartheta^2,1)$,
\eqref{eq:LY} therefore gives
$\|L_\vartheta\varphi\|_{W^{2,1}}
\le b\|\varphi\|_{W^{2,1}}+C_b\|\varphi\|_1$.
Iteration and $L^1$ contraction yield
\begin{equation}\label{eq:iterLY}
 \|L_\vartheta^n\varphi\|_{W^{2,1}}
 \le b^n\|\varphi\|_{W^{2,1}}
 +\frac{C_b}{1-b}\|\varphi\|_1.
\end{equation}
The compact embedding $W^{2,1}(I)\Subset L^1(I)$ and Hennion's criterion
\cite{Hennion} give $r_{\ess}\le b$.
Letting $b\downarrow\lambda_\vartheta^2$ proves the claim.
\end{proof}

\begin{proposition}\label{prop:gap}
For every $\vartheta\in(1,2)$, the map $f_\vartheta$ has a unique
absolutely continuous invariant probability measure. Its density
$h_\vartheta$ belongs to $W^{2,1}(I)$. On this space, $1$ is an
algebraically simple eigenvalue of $L_\vartheta$, and the rest of the
spectrum is contained in a disk of radius strictly smaller than one.
\end{proposition}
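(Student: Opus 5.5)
Using Proposition~\ref{prop:LY}, $L_\vartheta$ is quasi-compact on $W^{2,1}(I)$ with $r_{\ess}\le\lambda_\vartheta^2<1$. The spectral radius is at most one: by \eqref{eq:iterLY}, $\|L_\vartheta^n\|_{W^{2,1}}$ is bounded uniformly in $n$. The task is therefore to show three things: $1$ is an eigenvalue, its eigenspace is one-dimensional with trivial Jordan block, and there are no other peripheral eigenvalues.

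\textbf{Existence.} I take the Cesàro averages $\frac1n\sum_{k<n}L_\vartheta^k\mathbf 1$. These are nonnegative with integral one and, by \eqref{eq:iterLY}, bounded in $W^{2,1}$. Compactness of $W^{2,1}\Subset W^{1,1}\Subset L^1$ gives a limit point $h_\vartheta$, which lies in $W^{2,1}$ by lower semicontinuity of the norm under $L^1$ convergence (equivalently, weak-$*$ compactness in BV for derivatives). Its fixedness follows by $L^1$ continuity of $L_\vartheta$. Power boundedness also rules out Jordan blocks at any peripheral eigenvalue, so $1$ is semisimple.

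\textbf{Uniqueness and peripheral spectrum.} I would show that $f_\vartheta$ is exact with respect to Lebesgue measure. Then the absolutely continuous invariant probability measure is unique and mixing, so the peripheral spectrum reduces to $\{1\}$ with a one-dimensional eigenspace. For a simpler route, I would use positivity instead. Any fixed point $\varphi\in W^{2,1}$ has $|\varphi|$ also fixed, since $L|\varphi|\ge|L\varphi|$ with equal integrals. The positive and negative parts are then invariant densities with disjoint supports. I would show that any invariant density $h\ge0$ is bounded below by a positive constant, giving $\varphi=ah_\vartheta$.
\begin{itemize}
\item Being $W^{1,1}$, $h$ is continuous and positive on some interval $J$.
\item Full branches and uniform expansion $\Lambda_\vartheta>1$ imply that some iterate $f_\vartheta^N(J')=I$ for a subinterval $J'\subset J$: the monotone pieces grow by a factor $\Lambda_\vartheta$ until they cover a full branch domain.
\item The positivity of \eqref{eq:explicitL} then gives $L^N_\vartheta h\ge$ (a positive weight) $\times\min_{J'}h$ on all of $I$.
\end{itemize}
The weight $u_\vartheta/(K_\vartheta+\vartheta u_\vartheta)$ vanishes at $x=1$, since $u_\vartheta(1)=0$. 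So this lower bound is positive only on $[0,1)$, but that suffices for disjointness of supports to force one part to be zero.

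For an eigenvalue $e^{i\alpha}\ne1$ with eigenfunction $\psi$, the same domination gives $L|\psi|\ge|\psi|$. Hence $|\psi|=c\,h_\vartheta$, and the standard argument for equality in the triangle inequality in the two-term sum \eqref{eq:explicitL} forces $\psi/|\psi|\circ f_\vartheta=e^{i\alpha}\psi/|\psi|$ almost everywhere. Combined with the covering property, where a small interval maps onto $I$ through both branches, this makes the phase constant, so $e^{i\alpha}=1$.

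\textbf{Main obstacle.} I expect the hardest step to be the peripheral-spectrum and positivity argument, because the cusp weight degenerates at $x=1$. I would handle it through the covering property together with the fact that $W^{2,1}$ functions are continuous, so that positivity on a set of positive measure propagates.
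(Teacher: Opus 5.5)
Your route differs from the paper's. The paper gets existence and the cyclic decomposition on $\bv(I)$ from Rychlik's theorem, then transfers the result to $W^{2,1}$. You use Ces\`aro averages plus a direct positivity and phase argument instead. That argument, for geometric simplicity and for excluding other peripheral eigenvalues, is sound, since it only uses continuity of $W^{1,1}$ functions and the covering property.

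\textbf{Main gap: the regularity claim.} The genuine gap is in your existence step, exactly at the regularity the proposition asserts. A sequence bounded in $W^{2,1}(I)$ can converge in $L^1$ to a function outside $W^{2,1}(I)$. For example, mollifications of $|x-\tfrac12|$ have $\|\varphi_n''\|_1=2$, but their limit has a jump in the derivative. Lower semicontinuity, or weak-$*$ compactness in $\bv$ for the derivatives as you say, only gives $h_\vartheta\in W^{1,1}$ with $h_\vartheta'\in\bv(I)$. That is, $h_\vartheta''$ is a finite measure that may have a singular part. So your argument produces an absolutely continuous invariant measure, but it shows neither $h_\vartheta\in W^{2,1}(I)$ nor that $1$ is an eigenvalue of $L_\vartheta$ on $W^{2,1}(I)$. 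Your later steps presuppose both.

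The repair uses what you already set up. Power boundedness together with $r_{\ess}<1$ implies that $\frac1n\sum_{k<n}L_\vartheta^k$ converges in $W^{2,1}$ operator norm to the spectral projection for the eigenvalue $1$ (which is zero if $1$ is not an eigenvalue). Apply this to $\mathbf{1}$: the limit has integral one, so it is a nonzero fixed vector in $W^{2,1}(I)$, and it coincides with your $L^1$ limit point. The paper argues differently: $\ell(\varphi)=\int_I\varphi\,dx$ satisfies $L_\vartheta^*\ell=\ell$, so $1$ lies in the spectrum outside the essential disk. Hence it is an eigenvalue with an eigenvector in $W^{2,1}(I)$, and your continuity-based positivity argument then identifies that eigenvector with a multiple of $h_\vartheta$.

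\textbf{Smaller gap: uniqueness in $L^1$.} Your positivity argument uses continuity, so it only rules out a second invariant density lying in $W^{1,1}$. The statement asserts uniqueness among all $L^1$ densities. Your exactness route is only announced. Moreover, the standard bounded-distortion proof of exactness does not apply directly here, because the inverse-branch weights vanish linearly at $x=1$. Once the spectral gap on $W^{2,1}(I)$ is in place, the fix is the paper's. For $\varphi\in W^{2,1}(I)$ one has $L_\vartheta^n\varphi\to h_\vartheta\int_I\varphi\,dx$. By density of $W^{2,1}(I)$ in $L^1(I)$ and $L^1$ contraction, this holds for every $\varphi\in L^1(I)$, so every invariant $L^1$ probability density equals $h_\vartheta$.
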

\begin{proof}
The weight $1/|f_\vartheta'|$, set equal to zero at the cusp, has bounded
variation: its absolute derivative on each branch is the first coefficient
in \eqref{eq:lowerterms}, which is integrable. Together with uniform
expansion, this gives the hypotheses of Rychlik's theory \cite{Rychlik}.
An invariant probability density exists, and the transfer operator admits
a finite cyclic Perron--Frobenius decomposition on $\bv(I)$; see also
\cite[Chapter~3]{Baladi}.

Each inverse branch has Lipschitz constant at most $\lambda_\vartheta$.
Thus a rank-$n$ cylinder has diameter at most $\lambda_\vartheta^n$,
and its closure is mapped onto $I$ by $f_\vartheta^n$.
Every nonempty open interval contains such a closed cylinder for
sufficiently large $n$. Consequently $f_\vartheta$ is topologically exact.

To see why the cyclic decomposition is trivial, take a nonzero nonnegative
$v\in\bv(I)$ with $L_\vartheta^qv=v$ for some $q\ge1$. There are an open
interval $U$ and $d>0$ such that $v\ge d$ almost everywhere on $U$.
Choose a cylinder $J\subset U$ of rank $n$ divisible by $q$, and let
$g_J:(0,1)\to\operatorname{int}J$ be its inverse branch. Its derivative
is nonzero on $(0,1)$, so
$v=L_\vartheta^nv\ge d|g_J'|>0$ almost everywhere there.
Thus two nonnegative densities fixed by the same power cannot have disjoint
supports. The finite cyclic decomposition reduces to a single mixing
component: the invariant probability density is unique in $\bv$ and $1$
is the only peripheral eigenvalue. The associated convergence extends
from $\bv$ to $L^1$ by density and $L^1$ contraction, so there are no
additional invariant probability densities in $L^1$.

Spectra are understood on the complexified spaces. On $W^{2,1}(I)$ let
$\ell(\varphi)=\int_I\varphi\,dx$.
Since $L_\vartheta^*\ell=\ell$, the number $1$ belongs to the spectrum.
Proposition~\ref{prop:LY} puts it outside the essential spectral disk, so
it is an eigenvalue. Every corresponding eigenvector lies in $\bv(I)$
and is a nonzero multiple of $h_\vartheta$.
This proves $h_\vartheta\in W^{2,1}(I)$ and geometric simplicity. The same
inclusion excludes all other eigenvalues of modulus one on $W^{2,1}(I)$.

A nontrivial Jordan block at $1$ would give $(L_\vartheta-I)v=h_\vartheta$
after rescaling. Then $L_\vartheta^nv=v+nh_\vartheta$, contradicting
$\|L_\vartheta^nv\|_1\le\|v\|_1$.
The $L^1$ contraction also excludes eigenvalues of modulus greater than
one. These observations, together with $r_{\ess}<1$, prove the asserted
spectral gap.
\end{proof}

\section{The obstruction caused by an interior cusp value}

In \cite{OltiboevRakhimovRuziboev}, the strict inequality $\beta<-1/2$ also
gives $|f''|/|f'|^3=O(t^{-2\beta-1})\to0$, which makes the first derivative
of the transferred function vanish at the common cusp value. At the critical
exponent this mechanism changes. The cancellation in \eqref{eq:Df} controls
the interior differential expression, but does not by itself remove a
nonzero derivative trace at an interior branch-image endpoint.

\begin{proposition}\label{prop:interior}
Suppose that a nonsingular map $f:I\to I$ has two monotone branches onto
$[0,a]$, where $0<a<1$, and is symmetric near its cusp $c$.
Assume that, for sufficiently small $t>0$,
\begin{equation}\label{eq:interior}
 f(c\pm t)=a-2K\sqrt t+R(t),\qquad K>0,
\end{equation}
where $R\in C^2((0,\delta))$ satisfies
\[
 R(t)=O(t),\qquad R'(t)=O(1),\qquad R''(t)=O(t^{-1}).
\]
For $\varphi\in C^1(I)$, let $L_f\varphi$ be extended by zero to $(a,1]$.
Then
\begin{equation}\label{eq:trace}
 (L_f\varphi)(x)=\frac{\varphi(c)}{K^2}(a-x)+o(a-x),
 \qquad \lim_{x\uparrow a}(L_f\varphi)'(x)
 =-\frac{\varphi(c)}{K^2}.
\end{equation}
If $\varphi(c)\ne0$, this zero extension does not belong to $W^{2,1}(I)$.
\end{proposition}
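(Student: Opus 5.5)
We will work near the cusp value only, that is, for $x\in(a-\epsilon,a)$. There the two preimages are $c\pm t(x)$, where $t=t(x)\downarrow0$ is the local inverse of $s(t):=a-f(c\pm t)=2K\sqrt t-R(t)$. The first step is to invert this relation asymptotically. From $R(t)=O(t)$ we get $s=2K\sqrt t\,(1+O(\sqrt t))$, hence
\[
\sqrt t=\frac{s}{2K}\bigl(1+O(s)\bigr).
\]
By symmetry, both branches have $|f'(c\pm t)|=p(t):=K t^{-1/2}-R'(t)$. Since $R'=O(1)$, this gives
\[
\frac1{p(t)}=\frac{\sqrt t}{K}\bigl(1+O(\sqrt t)\bigr)=\frac{s}{2K^2}\bigl(1+O(s)\bigr).
\]
Therefore
\[
(L_f\varphi)(x)=\frac{\varphi(c-t)+\varphi(c+t)}{p(t)}=\frac{2\varphi(c)+O(t)}{p(t)}=\frac{\varphi(c)}{K^2}(a-x)+o(a-x).
\]
Here $t=O(s^2)$ and $\varphi\in C^1$. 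This is the first part of \eqref{eq:trace}.

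For the derivative we will use \eqref{eq:deriv1}, which holds branchwise away from the cusp. It gives
\[
(L_f\varphi)'(x)=\sum_\pm\Bigl[\frac{\varphi'}{f'|f'|}-\frac{f''\varphi}{(f')^2|f'|}\Bigr](c\pm t).
\]
The first group is $O(\|\varphi'\|_\infty/p^2)=O(s^2)\to0$. For the second group we need the quantity $|f''|/|f'|^3$ at the critical exponent. We compute $f''(c\pm t)=\mp$ or $\pm$ of $p'(t)=-\tfrac K2t^{-3/2}-R''(t)$. Using $R''=O(t^{-1})$ we get $|p'|=\tfrac K2t^{-3/2}(1+O(\sqrt t))$, and hence
\[
\frac{|p'|}{p^3}=\frac{K t^{-3/2}/2}{K^3t^{-3/2}}\bigl(1+O(\sqrt t)\bigr)\to\frac1{2K^2}.
\]
We must check the sign using the branch conventions of Lemma~\ref{lem:pure}. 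On the increasing branch $f'=p$ and $f''=-p'>0$; on the decreasing branch the signs combine identically. Each branch then contributes $-\varphi(c\pm t)\,|p'|/p^3\to-\varphi(c)/(2K^2)$. Summing the two branches gives $\lim_{x\uparrow a}(L_f\varphi)'=-\varphi(c)/K^2$. The main obstacle is exactly this sign bookkeeping, since $f''$ changes sign relative to $f'$ between branches, so we will verify it carefully against the first-order expansion already obtained. As a consistency check, the derivative of $\varphi(c)(a-x)/K^2$ is $-\varphi(c)/K^2$.

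Finally, suppose $\varphi(c)\ne0$. The zero extension is continuous at $a$, since the value there is $0$, but its derivative has one-sided limits $-\varphi(c)/K^2$ from the left and $0$ from the right. Any function in $W^{2,1}(I)$ has a derivative in $W^{1,1}\subset C(\bar I)$ after modification on a null set, so it cannot have a jump discontinuity. Hence the zero extension does not belong to $W^{2,1}(I)$.
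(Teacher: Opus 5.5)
Your proposal is correct and follows the paper's proof. It uses the same inversion $\sqrt t=(a-x)/(2K)+o(a-x)$, the same representation $L_f\varphi=q(t)/p(t)$, the same leading limit $p'/p^3\to-1/(2K^2)$, and the same contradiction with the continuity of first derivatives of $W^{2,1}$ functions. Your branchwise use of \eqref{eq:deriv1} is just the paper's differentiation of $q(t(x))/p(t(x))$ with $dt/dx=-1/p$, and one phrase should be tidied: instead of ``$\mp$ or $\pm$'', note that $f''(c\pm t)=-p'(t)$ on both branches, as in Lemma~\ref{lem:pure}, which is why the two contributions agree.
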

\begin{proof}
Write $p(t)=Kt^{-1/2}-R'(t)$ and
$q(t)=\varphi(c-t)+\varphi(c+t)$.
If $t=t(x)$ is the inverse distance to the cusp, then
\[
 a-x=2K\sqrt t+O(t),\qquad (L_f\varphi)(x)=\frac{q(t)}{p(t)}.
\]
As $q(t)\to2\varphi(c)$ and
$\sqrt t=(a-x)/(2K)+o(a-x)$,
the first assertion in \eqref{eq:trace} follows. Moreover,
$dt/dx=-1/p(t)$, so
\[
 (L_f\varphi)'(x)=-\frac{q'(t)}{p(t)^2}+\frac{q(t)p'(t)}{p(t)^3}.
\]
Here $q'$ is bounded,
$p'(t)=-(K/2)t^{-3/2}+O(t^{-1})$, and
$p(t)^3\sim K^3t^{-3/2}$. This proves the derivative limit in
\eqref{eq:trace}. The derivative of the zero extension is zero on
$(a,1)$. If $\varphi(c)\ne0$, its left and right traces at $a$ are
different, whereas every $W^{2,1}(I)$ function has a continuous first
derivative. This is a contradiction.
\end{proof}

For \eqref{eq:f}, the cusp value is $1$, so the nonzero derivative trace
causes no interior obstruction. The full-branch choice and the algebraic
cancellation therefore address distinct issues. Proposition~\ref{prop:interior}
does not rule out exceptional inputs with $\varphi(c)=0$.

\section{Bernoulli random perturbations and analytic response}

We now apply the probability-law viewpoint of
\cite{OltiboevRakhimovRuziboev,BahsounRuziboevSaussol} to an affine
perturbation of the critical family.
Fix distinct parameters $\vartheta_0,\vartheta_1\in(1,2)$, and write
$L_0=L_{\vartheta_0}$, $L_1=L_{\vartheta_1}$ and $D=L_1-L_0$.
Consider
\begin{equation}\label{eq:annealed}
 L_\varepsilon=(1-\varepsilon)L_0+\varepsilon L_1=L_0+\varepsilon D.
\end{equation}
For $0\le\varepsilon\le1$, this is the annealed transfer operator of the
system which independently chooses $f_{\vartheta_0}$ with probability
$1-\varepsilon$ and $f_{\vartheta_1}$ with probability $\varepsilon$ at
each step. Negative values of $\varepsilon$ are used only for analytic
continuation; they do not define Bernoulli probabilities. Put
\[
 W_0^{2,1}(I):=\left\{\varphi\in W^{2,1}(I):\int_I\varphi\,dx=0\right\}.
\]

\begin{theorem}\label{thm:analytic}
There exists $\varepsilon_0\in(0,1)$ such that, for every real
$|\varepsilon|<\varepsilon_0$, the operator $L_\varepsilon$ has an
isolated algebraically simple eigenvalue $1$ on $W^{2,1}(I)$ and a unique
normalized fixed vector $h_\varepsilon\in W^{2,1}(I)$, with
$\int_I h_\varepsilon\,dx=1$.
The map $\varepsilon\mapsto h_\varepsilon\in W^{2,1}(I)$ is real analytic.
For $0\le\varepsilon<\varepsilon_0$, this vector is nonnegative and hence
a stationary density of the Bernoulli system.

Let $h_0=h_{\vartheta_0}$ and
\[
 R_0=(I-L_0)^{-1}:W_0^{2,1}(I)\longrightarrow W_0^{2,1}(I).
\]
Then
\begin{equation}\label{eq:LR}
 h_0'=R_0Dh_0=(I-L_0)^{-1}(L_1-L_0)h_0,
\end{equation}
and, for every $n\ge1$,
\begin{equation}\label{eq:higher}
 \left.\frac{d^n}{d\varepsilon^n}h_\varepsilon\right|_{\varepsilon=0}
 =n!(R_0D)^n h_0.
\end{equation}
\end{theorem}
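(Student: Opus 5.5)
The plan is to use analytic perturbation theory for the affine family $L_\varepsilon=L_0+\varepsilon D$ on the complexified space $W^{2,1}(I)$. By Proposition~\ref{prop:LY}, $L_0$ and $L_1$ are bounded on $W^{2,1}(I)$, so $D$ is bounded. The family $\varepsilon\mapsto L_\varepsilon$ is then an entire operator-valued function of $\varepsilon\in\mathbb C$. By Proposition~\ref{prop:gap}, $1$ is an isolated, algebraically simple eigenvalue of $L_0$, and the rest of the spectrum lies in a disk of radius $r_0<1$.

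The first step is to fix a small circle $\Gamma$ around $1$ that separates it from the rest of $\sigma(L_0)$. The resolvent $(z-L_0)^{-1}$ is bounded uniformly on $\Gamma$. A Neumann series then shows that $z-L_\varepsilon$ is invertible for $z\in\Gamma$ and $|\varepsilon|<\varepsilon_0$, with $\varepsilon_0$ chosen as the inverse of $\|D\|\sup_\Gamma\|(z-L_0)^{-1}\|$ and then shrunk below one. The Riesz projection
\[
 P_\varepsilon=\frac{1}{2\pi i}\oint_\Gamma (z-L_\varepsilon)^{-1}\,dz
\]
is analytic in $\varepsilon$. It has rank one, because its rank is locally constant \cite{Kato}. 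Hence the spectrum of $L_\varepsilon$ inside $\Gamma$ is a single algebraically simple eigenvalue.

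The second step identifies this eigenvalue as $1$. The functional $\ell(\varphi)=\int_I\varphi\,dx$ satisfies $L_\varepsilon^*\ell=\ell$, since each $L_\vartheta$ preserves integrals. Therefore $\ell\circ(L_\varepsilon-\mu)=(1-\mu)\ell$. Take $v$ spanning the range of $P_\varepsilon$, with $L_\varepsilon v=\mu v$. If $\mu\ne1$, this relation gives $\ell(v)=0$. On the other hand, $\ell(P_\varepsilon h_0)$ depends analytically on $\varepsilon$ and equals $1$ at $\varepsilon=0$, so after shrinking $\varepsilon_0$ we have $\ell(P_\varepsilon h_0)\ne0$. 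Since $P_\varepsilon h_0$ is a multiple of $v$, this forces $\mu=1$. Set $h_\varepsilon=P_\varepsilon h_0/\ell(P_\varepsilon h_0)$. This is real analytic for real $\varepsilon$, normalized, and is the unique normalized fixed vector; uniqueness follows from the rank-one property.

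The third step is nonnegativity for $0\le\varepsilon<\varepsilon_0$. Here $L_\varepsilon$ is a positive operator that preserves integrals. By Lemma~\ref{lem:critical} and Proposition~\ref{prop:LY}, $L_\varepsilon$ itself satisfies the uniform Lasota--Yorke inequality with constant $\lambda^2=\max(\lambda_{\vartheta_0},\lambda_{\vartheta_1})^2$, so for the constant function $1\in W^{2,1}$ the iterates $L_\varepsilon^n1$ are nonnegative. From the spectral decomposition, $L_\varepsilon^n1\to P_\varepsilon1=\ell(1)h_\varepsilon=h_\varepsilon$ in $W^{2,1}$, hence in $L^1$. A limit of nonnegative functions is nonnegative, so $h_\varepsilon\ge0$. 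This convergence requires the rest of the spectrum of $L_\varepsilon$ to lie strictly inside the unit disk; that follows from upper semicontinuity of the spectrum, which I expect to be the main point needing care. Upper semicontinuity applies because the part of $\sigma(L_0)$ outside $\Gamma$ lies in a disk of radius $r_0<1$, and spectra outside $\Gamma$ move upper-semicontinuously under the small perturbation $\varepsilon D$.

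Finally, the response formulas. The operator $I-L_0$ maps $W_0^{2,1}$ into itself, since $\ell\circ L_0=\ell$. Its restriction there is invertible, because $W_0^{2,1}=\ker P_0$ and the spectrum of $L_0$ restricted to this subspace excludes $1$. Differentiating $h_\varepsilon=L_0h_\varepsilon+\varepsilon Dh_\varepsilon$ $n$ times at $\varepsilon=0$ gives
\[
 (I-L_0)h^{(n)}=nDh^{(n-1)}.
\]
The normalization $\ell(h_\varepsilon)=1$ gives $h^{(n)}\in W_0^{2,1}$. Also $Dh^{(n-1)}\in W_0^{2,1}$, because $\ell\circ D=0$. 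Hence $h^{(n)}=nR_0Dh^{(n-1)}$, and induction yields \eqref{eq:LR} and \eqref{eq:higher}.
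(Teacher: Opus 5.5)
Your proposal is correct and follows essentially the same route as the paper. Both use Kato's analytic perturbation of the rank-one spectral projection, integral preservation ($L_\varepsilon^*\ell=\ell$) to force the perturbed eigenvalue to be $1$, nonnegativity via $L_\varepsilon^n1\to h_\varepsilon$ under spectral separation, and induction on the differentiated affine fixed-point equation, with $R_0$ inverted on $W_0^{2,1}(I)$. The uniform Lasota--Yorke remark in your third step is unnecessary: positivity of $L_\varepsilon$ already gives $L_\varepsilon^n1\ge0$, and the spectral gap gives the convergence.
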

\begin{proof}
By Proposition~\ref{prop:gap}, $1$ is an isolated algebraically simple
eigenvalue of $L_0$ on $W^{2,1}(I)$, with the remaining spectrum strictly
inside the unit disk. Since $D$ is bounded on this space, \eqref{eq:annealed}
is analytic in operator norm. Analytic perturbation theory \cite{Kato}
gives an analytic rank-one spectral projection and an eigenvector
$v_\varepsilon$ near $h_0$. Its integral is nonzero for small
$\varepsilon$. Integrating its eigenvalue equation and using integral
preservation shows that the associated eigenvalue is exactly $1$.
Normalization yields the analytic family $h_\varepsilon$.

After decreasing $\varepsilon_0$, spectral separation keeps the remaining
spectrum inside a disk of radius less than one. The spectral projection is
$P_\varepsilon\varphi=h_\varepsilon\int_I\varphi\,dx$, and therefore
\begin{equation}\label{eq:limitdensity}
 L_\varepsilon^n1\longrightarrow h_\varepsilon
 \quad\text{in }W^{2,1}(I),\qquad n\to\infty.
\end{equation}
For $0\le\varepsilon<\varepsilon_0$, each function on the left is
nonnegative and has integral one. Thus the limit is a stationary probability
density.

The spectral projection of $L_0$ is $P_0\varphi=h_0\int_I\varphi\,dx$,
so the restriction of $L_0$ to $W_0^{2,1}(I)=\ker P_0$ has spectral radius
less than one. Hence $R_0=\sum_{k=0}^\infty L_0^k$ converges in operator
norm on $W_0^{2,1}(I)$. Also, $D$ maps $W^{2,1}(I)$ into this
zero-integral subspace, because both $L_0$ and $L_1$ preserve integrals.
Thus $R_0D$ is well defined on $W^{2,1}(I)$.

Differentiating $L_\varepsilon h_\varepsilon=h_\varepsilon$ and
$\int_Ih_\varepsilon\,dx=1$ at zero gives
$(I-L_0)h_0'=Dh_0$ and $\int_Ih_0'\,dx=0$, which proves \eqref{eq:LR}.
Since \eqref{eq:annealed} is affine, repeated differentiation yields
\[
 (I-L_0)h_0^{(n)}=nDh_0^{(n-1)},\qquad
 \int_Ih_0^{(n)}\,dx=0,\qquad n\ge1.
\]
Induction gives \eqref{eq:higher}. Equivalently, for sufficiently small
$|\varepsilon|$,
\begin{equation}\label{eq:powerSeries}
 h_\varepsilon=h_0+\sum_{n=1}^\infty\varepsilon^n(R_0D)^nh_0
 \quad\text{in }W^{2,1}(I).
\end{equation}
\end{proof}

\begin{corollary}\label{cor:observables}
For every bounded linear functional $\Phi$ on $W^{2,1}(I)$,
$\varepsilon\mapsto\Phi(h_\varepsilon)$ is analytic near zero, and
\[
 \left.\frac{d^n}{d\varepsilon^n}\Phi(h_\varepsilon)\right|_{\varepsilon=0}
 =n!\,\Phi\bigl((R_0D)^nh_0\bigr),\qquad n\ge1.
\]
In particular, for every $\varphi\in L^\infty(I)$,
\begin{equation}\label{eq:observables}
 \left.\frac{d}{d\varepsilon}\int_I\varphi h_\varepsilon\,dx\right|_{\varepsilon=0}
 =\int_I\varphi R_0(L_1-L_0)h_0\,dx.
\end{equation}
\end{corollary}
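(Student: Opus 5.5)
The corollary follows by composing the analytic map from Theorem~\ref{thm:analytic} with a bounded linear functional. I would organize the argument in three steps.

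First, take the power series \eqref{eq:powerSeries}. It converges in $W^{2,1}(I)$ for $|\varepsilon|$ small. A bounded linear functional $\Phi$ commutes with norm-convergent series, so
\[
\Phi(h_\varepsilon)=\Phi(h_0)+\sum_{n\ge1}\varepsilon^n\Phi\bigl((R_0D)^nh_0\bigr).
\]
Since $|\Phi((R_0D)^nh_0)|\le\|\Phi\|\,\|(R_0D)^nh_0\|_{W^{2,1}}$, this scalar series has at least the same radius of convergence. Hence $\varepsilon\mapsto\Phi(h_\varepsilon)$ is real analytic near zero. Reading off the Taylor coefficients, or equivalently applying $\Phi$ to \eqref{eq:higher} (bounded linear maps commute with Fréchet derivatives), gives the derivative formula with the factor $n!$.

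Second, for the observable statement, fix $\varphi\in L^\infty(I)$ and set $\Phi(\psi)=\int_I\varphi\psi\,dx$. Then
\[
|\Phi(\psi)|\le\|\varphi\|_\infty\|\psi\|_1\le\|\varphi\|_\infty\|\psi\|_{W^{2,1}},
\]
so $\Phi$ is bounded on $W^{2,1}(I)$. Taking $n=1$ and substituting $D=L_1-L_0$ yields \eqref{eq:observables}.

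There is no genuine obstacle here. The only point needing care is that the interchange of $\Phi$ with the series and the derivatives relies on convergence in the $W^{2,1}$ norm, which Theorem~\ref{thm:analytic} provides. If $\Phi$ is complex-valued, one can take real and imaginary parts, or simply note that the scalar power series is analytic either way.
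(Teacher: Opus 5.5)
Your proposal is correct and follows the argument the paper intends. The paper gives no separate proof and treats the corollary as an immediate consequence of Theorem~\ref{thm:analytic}: you apply the bounded functional $\Phi$ to the $W^{2,1}$-convergent expansion \eqref{eq:powerSeries} (equivalently to \eqref{eq:higher}), and for the observable case you use $\bigl|\int_I\varphi\psi\,dx\bigr|\le\|\varphi\|_\infty\|\psi\|_{W^{2,1}}$ with $n=1$.
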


\section{Concluding remarks}

The example shows that the strict restriction $\beta<-1/2$ in the separate
Sobolev coefficient estimates is not an intrinsic barrier for every cusp map.
At the endpoint, the factor $2\beta+1$ cancels the nominally non-integrable
leading term, while the full-branch geometry avoids an interior jump in the
first derivative. Together these properties give $W^{2,1}$ invariant densities
and analytic response to a Bernoulli probability perturbation.

A natural continuation is to replace the explicit derivative
$\vartheta+K_\vartheta t^{-1/2}$ by an expansion
$Kt^{-1/2}+A+O(t^{1/2})$ with compatible estimates on its first two
derivatives. The resulting general endpoint theory, as well as deterministic
perturbations of the maps themselves, is not pursued here.

\subsection*{Acknowledgements}
This research was supported by the Ministry of Higher Education, Science and
Innovation of the Republic of Uzbekistan under State Grant No.\ FL-9524115114.

\end{document}